\newtheorem{theorem}{Theorem}[section]
\newtheorem{proposition}[theorem]{Proposition}
\newtheorem{lemma}[theorem]{Lemma}
\theoremstyle{definition}
\newtheorem{remark}[theorem]{Remark}
\newtheorem{question}[theorem]{Question}
\def\IQ{\mathbb{Q}}
\def\IN{\mathbb{N}}
\def\IR{\mathbb{R}}
\def\P{\mathcal{P}}
\def\Ph{\hat{\P}}
\def\Q{\mathcal{Q}}
\def\RG{\mathcal{RG}}
\def\s2{\{0, 1\}}
\newcommand{\mathdef}{\stackrel{\textrm{\scriptsize def}}{=}}
\newcommand{\cl}{\mbox{cl}}
\newcommand{\conv}{\mbox{conv}}
\newcommand{\ct}{\widetilde{c}_4}
\newcommand{\Pt}{\widetilde{P}}
\begin{document}

\title{On the density of triangles and squares\\
in regular finite and unimodular random graphs}


\author{Viktor Harangi}

\date{}

\maketitle
\begin{abstract}
We explicitly describe the possible pairs of triangle and square densities
for $r$-regular finite simple graphs.
We also prove that every $r$-regular unimodular random graph
can be approximated by $r$-regular finite graphs with respect to these densities.
As a corollary one gets an explicit description of the possible pairs
of the third and fourth moments of the spectral measure of
$r$-regular unimodular random graphs.
\bigskip

\noindent \emph{2010 Mathematics Subject Classification:}
Primary 05C38. Secondary 05C80, 05C81.

\noindent \emph{Keywords:} cycle, return probability, regular graph,
graph convergence, unimodular random graph, spectral measure.

\end{abstract}

\noindent
\textsc{A.~R\'enyi Institute of Mathematics,
\\Hungarian Academy of Sciences, \\
P.O.B.~127, H-1364 Budapest, Hungary}\\
\textit{Email address}: \verb+harangi@gmail.com+

\bigskip
\noindent
\emph{Acknowledgment:} The author was supported by
Hungarian Scientific Foundation grant no.~72655.


\section{Introduction}
For a finite simple graph $G$ with vertex set $V(G)$
let $c_k(G)$ denote the total number of $k$-cycles (i.e., cycles of length $k$) in $G$.
We define the \textit{density of $k$-cycles} in $G$ as
$$ d_k = d_k(G) \mathdef \frac{c_k(G)}{ \left| V(G)) \right| } .$$
In this paper we explicitly describe the possible pairs $(d_3, d_4)$
for $r$-regular (not necessarily connected) 
finite graphs for any fixed integer $r \geq 3$.
In other words, we describe the relation between the normalized number of
three- and four-cycles in regular graphs.
Problems of similar nature have been studied in the literature.
For instance, the following question of Erd\H os has been settled only recently:
given an arbitrary graph $G$ on $n$ vertices,
what is the maximal number of $5$-cycles if there is no $3$-cycle in $G$?
For details, see \cite{gyori, razborov_et_al}.

Our method actually works in a more general setting:
not only for finite graphs but also for unimodular random graphs.
A random graph is a probability distribution
on the space of locally finite, connected rooted graphs.
A random graph is unimodular if it satisfies a certain invariance property
that will be explained in Section \ref{sec:unimodular}.
The study of unimodular random graphs was initiated
by Aldous and Lyons in \cite{aldous_lyons}.
This notion drew a lot of attention partially because of
its connection to Benjamini-Schramm convergence.
The limit of a convergent sequence of finite graphs is
a random graph that is necessarily unimodular.
One of the most intriguing open questions in the area is
whether the condition of unimodularity is also sufficient.
\begin{question}[Question 10.1 of \cite{aldous_lyons}] \label{q:big_open}
Can every unimodular random graph
be obtained as the Benjamini-Schramm limit of finite graphs?
\end{question}
%
%
Mikl\'os Ab\'ert suggested the following approach.
One can naturally define the densities $d_k$ for a random graph
($d_k$ is the expected number of $k$-cycles containing the root divided by $k$).
Thus for some integers $k_1, \ldots, k_m$ the tuple
\begin{equation*} 
\left( d_{k_1}, d_{k_2}, \ldots, d_{k_m} \right)
\end{equation*}
can be associated both to finite graphs and to random graphs.
It is easy to see that if a sequence $G_n$ of finite graphs
converges to a random graph $G$, then $d_k(G_n)$ tends to $d_k(G)$ for any fixed k.
So it is natural to ask whether any $r$-regular unimodular random graph $G$
can be approximated by $r$-regular finite graphs
in the sense that the tuples of the finite graphs
converge to the tuple corresponding to $G$.
If we found a unimodular random graph that cannot be
approximated in the above sense for some $k_1, \ldots, k_m$, then
this random graph would be impossible to be obtained
as the Benjamini-Schramm limit of finite graphs.
We settle the question for the first non-trivial case $(d_3, d_4)$.
\begin{theorem}
The set
$$ \P^r_{3,4} \mathdef \left\{ \left( d_3(G), d_4(G) \right) :
\mbox{$G$ is an $r$-regular finite simple graph} \right\} \subset \IR^2 $$
is dense in the set
$$ \Ph^r_{3,4} \mathdef \left\{ \left( d_3(G), d_4(G) \right) :
\mbox{$G$ is an $r$-regular unimodular random graph} \right\} \subset \IR^2 .$$
In fact, we give an explicit description of
the sets $\P^r_{3,4}$ and $\Ph^r_{3,4}$ for any given $r$.
\end{theorem}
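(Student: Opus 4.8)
The plan is to pass from the two cycle densities to the low-order moments of the spectral measure and then to invoke the classical truncated moment problem on $[-r,r]$. Writing $A$ for the adjacency operator and $m_k$ for the $k$-th moment of the (expected) spectral measure at the root --- so that $m_k$ is the (expected) number of closed walks of length $k$ issuing from a vertex --- a short count of closed walks gives $m_1=0$, $m_2=r$, and the affine identities $d_3=\tfrac16 m_3$ and $d_4=\tfrac18(m_4-2r^2+r)$. These hold verbatim for finite graphs (normalized traces of $A^3,A^4$) and for $r$-regular unimodular random graphs (expected return counts), so describing $(d_3,d_4)$ is equivalent to describing $(m_3,m_4)$ over all probability measures on $[-r,r]$ that arise as such a spectral measure and satisfy $m_1=0,\ m_2=r$.

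For the outer bound I would collect every inequality that $(m_3,m_4)$ must obey. Two families appear. First, the moment-problem constraints: the Hankel matrix built from $m_0,\dots,m_4$ must be positive semidefinite, and the localized measure $(r^2-x^2)\,d\mu\ge 0$ forces its Hankel matrix to be positive semidefinite as well; these yield the two explicit parabolic bounds $m_4\ge m_3^2/r+r^2$ and $m_4\le r^3-m_3^2/(r(r-1))$. Second, the combinatorial constraints coming from the fact that $d_3,d_4$ are genuine nonnegative counts: $d_3\ge 0$, $d_4\ge 0$, and, more importantly, a convexity inequality tying the two together, obtained by expressing $c_4$ through the codegrees and applying Jensen to the edge-codegrees (equivalently, two triangles on a common edge force a square). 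The intersection of all these inequalities is an explicit closed convex region $R\subset\IR^2$; since each inequality is a consequence of the spectral measure being a probability measure on $[-r,r]$ together with nonnegativity of cycle counts, every $r$-regular unimodular random graph lands in $R$, i.e.\ $\Ph^r_{3,4}\subseteq R$. Convexity of $\Ph^r_{3,4}$ itself is automatic, since a mixture of unimodular random graphs is again unimodular and $d_3,d_4$ are affine in the distribution.

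For the matching inner bound I would first locate the extreme points and boundary arcs of $R$. The three corners should be realized by $K_{r+1}$, by $K_{r,r}$, and by any high-girth (e.g.\ tree-like) graph giving $(0,0)$; the straight portions of the boundary and the whole interior are then filled by disjoint unions, which realize arbitrary rational convex combinations of already-constructed points and place $\conv$ of the constructed set inside the closure of $\P^r_{3,4}$. The delicate parts are the curved arcs, which correspond exactly to the extremal, finitely supported measures of the truncated moment problem: the upper parabola is traced by measures $\alpha\delta_{-r}+\beta\delta_\xi+\gamma\delta_r$ as the interior atom $\xi$ ranges over an interval, and the lower arc by an analogous family governed by the triangles-force-squares inequality. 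Here I would produce explicit $r$-regular unimodular random graphs whose spectral measure is exactly such a three-atom measure, giving $R\subseteq\Ph^r_{3,4}$ and hence $\Ph^r_{3,4}=R$.

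The main obstacle is the last step on the finite side: a three-atom measure $\alpha\delta_{-r}+\beta\delta_\xi+\gamma\delta_r$ with an interior atom $\xi\notin\{-r,0,r\}$ cannot be the spectral measure of any finite $r$-regular graph, because positive mass at both edges $\pm r$ would require bipartite components, whose symmetric spectra are incompatible with a single interior atom $\xi\ne 0$. Thus the interior of the curved boundary is attained only in the limit, and the crux is to approximate these extremal unimodular random graphs by finite graphs in the two quantities $d_3,d_4$ \emph{alone}. This is strictly weaker than Benjamini--Schramm approximation, which is exactly the open Question~\ref{q:big_open}, and that is precisely what makes the theorem accessible: I would construct, for each target $\xi$, an explicit sequence of finite $r$-regular graphs whose third and fourth moments converge to those of the target measure, so that $R\subseteq\overline{\P^r_{3,4}}$. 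Combining the inclusions $\P^r_{3,4}\subseteq\Ph^r_{3,4}=R\subseteq\overline{\P^r_{3,4}}$ then yields both the density statement and the explicit description of the two sets.
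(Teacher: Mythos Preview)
Your outer bound is not tight, and this is the fatal gap. The true region $\Ph^r_{3,4}=\cl(\P^r_{3,4})$ is a convex \emph{polygon} $\Q^r$ with $\lceil r/2\rceil+2$ vertices, not a region bounded by parabolic arcs. Besides $K_{r+1}$, $K_{r,r}$ and a high-girth graph, there are further extreme graphs $C^r_l$ for $2\le l\le\lceil r/2\rceil$: these are $r$-regular graphs in which every neighbourhood $G^x$ is a disjoint union of $l$ cliques of nearly equal size (they are built from regular hypergraphs of girth $\ge 5$). Your moment-problem inequalities together with the codegree/Jensen bound you sketch yield only the parabola $d_4\ge\frac{1}{8}\,(6d_3/r)(6d_3-r)$ on which all the points $P^r_l=P(C^r_l)$ happen to lie; but since a convex parabola lies \emph{below} its chords, this is strictly weaker than the actual lower boundary, which is the broken line $P^r_1P^r_2\cdots P^r_r$. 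In particular there are points between the parabola and the broken line that satisfy every inequality in your region $R$ yet are not attained by any $r$-regular unimodular random graph, so $R\supsetneq\Ph^r_{3,4}$ and your claimed equality $\Ph^r_{3,4}=R$ fails.

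Correspondingly, your inner-bound step cannot succeed as written: the three-atom measures tracing your lower parabola are \emph{not} all realised as expected spectral measures of $r$-regular unimodular random graphs (if they were, $\Ph^r_{3,4}$ would reach the parabola, contradicting the polygonal description). What is actually needed for the lower boundary is a combinatorial argument that works vertex by vertex: one replaces $c_4(G,o)$ by the modified count $\tilde c_4(G,o)$ that records only $4$-cycles with at least one diagonal through the root (counting those with exactly one diagonal twice), observes via mass transport that $\int c_4\,d\lambda\ge\int\tilde c_4\,d\lambda$, and then notes that $(c_3(G,o),\tilde c_4(G,o))$ depends only on the induced neighbourhood $G^o$ on $r$ vertices. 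The statement that this local point lies above the broken line is, after taking complements in $G^o$, exactly Bollob\'as's 1976 theorem that among $r$-vertex graphs the pair $(e,\#K_3)$ lies above the broken line through the Tur\'an graphs. That is the missing key lemma; the truncated moment problem on $[-r,r]$ does not see it.
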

The above results can be interpreted in the language of spectral measures as well.
For a finite graph $G$ consider the transition matrix $M_G$
of the simple random walk on $G$.
The set of eigenvalues of $M_G$ is called the spectrum of $G$.
One can get a probability measure on the spectrum
by putting mass $1/|V(G)|$ at each eigenvalue (counting multiplicities).
We call this measure the eigenvalue distribution of $G$
and denote it by $\mu_G$.

A related probability measure (the so-called spectral measure) can be
associated to locally finite, connected rooted graphs;
for details, see Section \ref{sec:unimodular}.
From our point of view one of the most important features
of these measures is the following: if $G_n$ is a graph sequence
converging to a random graph $G$,
then the measures $\mu_{G_n}$ weakly converge
to the expected spectral measure of $G$, which we will also denote by $\mu_G$.

\begin{question}[Ab\'ert] \label{q:abert1}
Can the expected spectral measure of every unimodular random graph
be obtained as the weak limit of the eigenvalue distribution of finite graphs?
\end{question}

Of course, a negative answer to this question would imply
a negative answer to Question \ref{q:big_open}.
When studying these measures, it is natural to look at their moments:
the $k$-th moment of $\mu_G$
is defined as $\int x^k \, \mathrm{d} \mu_G(x)$.
For $k=3,4$ these $k$-th moments can be computed from the densities $d_k$.
Thus as a corollary of our result we get a description of the possible pairs of
third and fourth moments of $\mu_G$ for $r$-regular finite graphs
and also for $r$-regular unimodular random graphs.
Note that in a recent paper Ab\'ert, Glasner and Vir\'ag
studied the possible shapes of the eigenvalue distribution of
$r$-regular finite graphs \cite{abert_glasner_virag}.

\subsection*{Acknowledgments}
The author is grateful to Mikl\'os Ab\'ert for suggesting the problem,
and to P\'eter Csikv\'ari for useful comments.


\section{Finite graphs} \label{sec:finite}

In this section we study the density of triangles and squares
in regular finite graphs.

\subsection{Notations and preliminaries}

Let $G$ be an $r$-regular finite simple graph for a fixed integer $r \geq 3$;
$V(G)$ denotes the set of its vertices, $E(G)$ is the set of its edges.
First we explain how the third and fourth moments
of the eigenvalue distribution of $G$
can be expressed in terms of the density of three- and four-cycles in $G$.
Recall that $M_G$ denotes the transition matrix
of the simple random walk on $G$;
let the eigenvalues of $M_G$ be $\lambda_1, \ldots, \lambda_{|V(G)|}$.
The eigenvalue distribution of $G$ is the following probability measure:
$$ \mu_G \mathdef \frac{1}{|V(G)|}\sum_{i=i}^{|V(G)|} \delta_{\lambda_i} ,$$
where $\delta_x$ is the Dirac measure centered on $x$.
The $k$-th moment of this measure is
$$ \int x^k \, \mathrm{d} \mu_G(x)=
\frac{1}{|V(G)|} \sum_{i=1}^{|V(G)|} \lambda_i^k .$$
The eigenvalues of the $k$-th power of the transition matrix are
$\lambda_i^k$; $i=1,\ldots, |V(G)|$. So their sum is equal to the trace of $M_G^k$,
which, in turn, can be expressed in terms of return probabilities.
Consider the simple random walk on $G$ starting from a vertex $x$.
Let $p_k(G,x)$ denote the probability of return in $k$ steps.
It is easy to see that these return probabilities $p_k(G,x)$
are the elements of the main diagonal of the matrix $M_G^k$.
So the $k$-th moment in question is simply
the average of these return probabilities:
$$ \int x^k \, \mathrm{d} \mu_G(x) =
\frac{1}{|V(G)|} \sum_{x \in V(G)} p_k(G,x) .$$
For $k=3,4$, $p_k(G,x)$ is determined by
the number of $k$-cycles containing $x$,
let us denote this number by $c_k(G,x)$. Clearly,
\begin{equation*} 
p_3(G,x) = \frac{2 c_3(G,x)}{r^3} \ ; \
p_4(G,x) = \frac{2 c_4(G,x) + 2r^2 - r }{r^4} .
\end{equation*}
Let the total number of $k$-cycles in $G$ be $c_k(G)$; then
$$ c_k(G) = \frac{1}{k} \sum_{x \in V(G)} c_k(G,x) .$$
We define the density of the $k$-cycles as
$$ d_k(G) = \frac{c_k(G)}{|V(G)|} .$$
It follows that
\begin{equation*} 
\int x^3 \, \mathrm{d} \mu_G(x) = \frac{6}{r^3}d_3(G) \ ; \
\int x^4 \, \mathrm{d} \mu_G(x) =
\frac{8}{r^4}d_4(G) + \frac{2r-1}{r^3} .
\end{equation*}
Consequently, determining the possible pairs of the third and fourth moments of $\mu_G$
is equivalent to describing the possible pairs $\left( d_3(G), d_4(G) \right)$.

We introduce the following notations.
For a graph $G$ and a vertex $x \in V(G)$ we set
\begin{align*}
P(G,x) = P_{3,4}(G,x) \mathdef
\left( c_3(G,x)/3, c_4(G,x)/4 \right) \in \IR^2, \\
P(G) = P_{3,4}(G) \mathdef \left( d_3(G), d_4(G) \right) \in \IR^2 .
\end{align*}
By definition, $P(G)$ is the center of the points $P(G,x); x \in V(G)$.
Which points of $\IR^2$ can we get as $P(G)$ for some $r$-regular graph $G$?
How does the set of such points look like? Let
$$ \P^r = \P^r_{3,4} \mathdef \left\{ P_{3,4}(G) :
\mbox{$G$ is an $r$-regular finite simple graph} \right\} \subset \IR^2 .$$
The following simple observation shows that the closure of this set is convex.
\begin{proposition} \label{prop:convex}
For any two points $P_1,P_2 \in \P^r$ and any rational number $0<q<1$,
the convex combination $q P_1 + (1-q) P_2$ also lies in $\P^r$.
Consequently, $\cl(\P^r)$ must be convex.
\end{proposition}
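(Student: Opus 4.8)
The plan is to exploit the fact, recorded just above the statement, that $P(G)$ is the vertex-average of the local points $P(G,x)$, together with the observation that $k$-cycles through a vertex stay within its connected component. Given graphs $G_1, G_2$ realizing $P_1, P_2 \in \P^r$, I would form a suitable disjoint union of copies of each. Since $G_1$ and $G_2$ are $r$-regular and simple, any disjoint union is again $r$-regular and simple. The crucial point is that if $x$ lies in one component, then every $k$-cycle through $x$ is contained in that component, so $c_k(\cdot, x)$, and hence $P(\cdot, x)$, is unchanged when we pass to the disjoint union. Therefore $P$ of the disjoint union is exactly the average of the local points $P(G_i, x)$ over all vertices of all copies.

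Concretely, to realize a rational combination $q P_1 + (1-q) P_2$ with $q = a/b$ for integers $0 < a < b$, I would take $m$ copies of $G_1$ and $n$ copies of $G_2$, obtaining a graph $H$ with
\[
P(H) = \frac{m\,|V(G_1)|\, P_1 + n\,|V(G_2)|\, P_2}{m\,|V(G_1)| + n\,|V(G_2)|},
\]
a vertex-count-weighted average of $P_1$ and $P_2$. It then remains to choose $m, n$ so that the weight on $P_1$ equals $q$. Solving $\frac{m|V(G_1)|}{m|V(G_1)| + n|V(G_2)|} = \frac{a}{b}$ leads to the choice $m = a\,|V(G_2)|$ and $n = (b-a)\,|V(G_1)|$, which are positive integers; a short computation then confirms $P(H) = q P_1 + (1-q) P_2$, so this point belongs to $\P^r$.

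For the closure statement I would show that $\cl(\P^r)$ is closed under arbitrary real convex combinations of its own points. Given $A, B \in \cl(\P^r)$ and $t \in [0,1]$, I would pick sequences $A_j, B_j \in \P^r$ with $A_j \to A$ and $B_j \to B$, together with rationals $q_j \to t$ in $(0,1)$. By the first part, $q_j A_j + (1-q_j) B_j \in \P^r$, and this sequence converges to $t A + (1-t) B$, which therefore lies in $\cl(\P^r)$. This yields convexity of the closure.

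I do not expect a serious obstacle here: the disjoint-union construction is essentially forced by the averaging structure of $P(G)$, and the closure argument is a routine limiting procedure. The one point genuinely worth flagging is that the average in the definition of $P(G)$ is taken \emph{over vertices}, so naively taking equal numbers of copies of $G_1$ and $G_2$ would produce weights proportional to their vertex counts rather than the intended ratio $q : (1-q)$. Compensating for this is precisely the role of the factors $|V(G_2)|$ and $|V(G_1)|$ in the multiplicities $m$ and $n$ above.
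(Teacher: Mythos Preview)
Your proposal is correct and follows essentially the same approach as the paper: the paper also takes $a|V(G_2)|$ copies of $G_1$ and $(b-a)|V(G_1)|$ copies of $G_2$ as a disjoint union to realize $qP_1+(1-q)P_2$. Your added limiting argument for the convexity of $\cl(\P^r)$ simply spells out what the paper leaves implicit.
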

\begin{proof}
Take an $r$-regular $G_i$ with $P(G_i) = P_i$, $i=1,2$.
Let $a<b$ be positive integers with $q = a/b$.
Let $G$ be the disjoint union of $a|V(G_2)|$ copies of $G_1$
and $(b-a)|V(G_1)|$ copies of $G_2$.
Clearly, $G$ is $r$-regular and $P(G) = q P_1 + (1-q) P_2$.
\end{proof}
How does this compact convex set $\cl(\P^r)$ look like for a given $r$?
Using a result of Bollob\'as we managed to fully describe this convex set.
We will prove that it is a convex polygon with $\lceil r/2 \rceil +2$ vertices.
In fact, $\P^r$ consists of those points in this polygon
which have rational coordinates.

We need to introduce a few more notations.
In an $r$-regular graph $G$ we denote
the set of neighbors of a vertex $x$ by $N(x)$,
and the induced subgraph on $N(x)$ by $G^x$. Clearly,
$$ \left| V(G^x) \right| = \left| N(x) \right| = r \ ; \
\left| E(G^x) \right| = c_3(G,x) .$$

For the sake of simplicity we will use
the term \textit{cherry} for paths of length $2$.
We will call the first and last vertex of the path \textit{leaves},
while the middle vertex will be referred to as the \textit{node} of the cherry.
It is easy to see that $c_4(G,x)$ equals the number of cherries in $G$
with both leaves lying in $N(x)$ and the node being different from $x$.
We distinguish two types of such cherries depending on whether
the node also lies in $N(x)$ (type 1) or not (type 2).

Suppose that for a given vertex $x$ the graph $G^x$
has degrees $d_1, \ldots, d_r \geq 0$.
Then the number of type 1 cherries with respect to $x$
equals the number of cherries in $G^x$:
$$ \sum_{i=1}^{r} \binom{d_i}{2} .$$
The type 2 cherries are those having both leaves in $N(x)$ and
their node outside $\{x\} \cup N(x)$.
Let us consider those edges of $G$ which have one endpoint in $N(x)$
and one outside $\{x\} \cup N(x)$.
If each of these edges has a different endpoint outside $\{x\} \cup N(x)$,
then there are no type 2 cherries with respect to $x$.
Soon we will see examples of regular graphs that do not have
any type 2 cherries with respect to any of its vertices.

\subsection{Constructing the extreme graphs} \label{sec:constr}

In this section we will construct the \textit{extreme graphs} of our problem,
that is, the graphs corresponding to the vertices of the polygon $\cl(\P^r)$.
Consider an arbitrary partition of $r$:
$$ r = r_1 + \cdots + r_l $$
for some positive integers $l, r_1, \ldots, r_l$.
Due to Lemma \ref{lemma:reg_hyp} of the Appendix there exists a hypergraph $H$
such that each of its vertices is contained by
$l$ hyperedges with sizes $r_1+1, \ldots, r_l+1$,
and the girth of $H$ is at least $5$
(that is, any Berge cycle has length at least $5$).
Now the graph $G=G_{r_1,\ldots,r_l}$ is obtained from $H$ by the following way:
$V(G) = V(H)$ and two vertices are connected with an edge in $G$
if there is a hyperedge in $H$ which contains both of them.

It can be seen easily that $G$ is an $r$-regular graph with the property
that for each $x \in V(G)$ the graph $G^x$ is isomorphic to
the disjoint union of the complete graphs $K_{r_1}, \ldots, K_{r_l}$.
Also, $G$ does not contain any type 2 cherries,
which follows easily from the fact that $H$ has girth at least $5$.

How can one compute the values of $d_3(G)$ and $d_4(G)$ for such a graph $G$?
It is quite easy because $G$ was constructed in such a way that
$c_3(G,x)$ and $c_4(G,x)$ do not depend on $x \in V(G)$:
\begin{equation*} 
c_3(G,x) = \sum_{i=1}^l \binom{r_i}{2} \ ; \
c_4(G,x) = \sum_{i=1}^l r_i \binom{r_i-1}{2} .
\end{equation*}
It follows that $d_3(G) = c_3(G,x)/3$ and $d_4(G) = c_4(G,x)/4$
for an arbitrary vertex $x \in V(G)$.

We will use these graphs only in the special case
when the $r_i$'s are (almost) equal. 
More precisely, for a positive integer $l \leq r$
we take the partition of $r$ into $l$ parts
with as equal parts as possible; namely, we set
$$ r_1 = \left[ \frac{r}{l} \right] \ ; \
 r_2 = \left[ \frac{r+1}{l} \right] \ ; \ \ldots \ ; \
  r_l = \left[ \frac{r+l-1}{l} \right] .$$
We use these values in the above construction; let $C^r_l$ denote the obtained graph.
We know that all the induced subgraphs $(C^r_l)^x$ are isomorphic
to the $r$-vertex graph which is the disjoint union of $l$ complete graphs
with sizes as equal as possible; let this graph be $D^r_l$.
The complement of $D^r_l$ is the Tur\'an graph $T^r_l$:
the unique graph that has the maximum possible number of edges
of any $r$-vertex graph not containing a complete graph $K_{l+1}$.

Also note that the graph $C^r_1$ is isomorphic to
the complete graph $K_{r+1}$ (or the disjoint union of $K_{r+1}$'s),
while $C^r_r$ has girth at least $5$, that is,
it does not contain any three- or four-cycles.

For a fixed $r$ let us consider these graphs $C^r_1, \ldots, C^r_r$
along with the complete bipartite graph $K_{r,r}$.
(For notational convenience we set $C^r_0 \mathdef K_{r,r}$.)
Each of these graphs is $r$-regular, so
the corresponding points $P^r_l \mathdef P(C^r_l)$ are in $\P^r$.
It follows that the convex hull of these points is contained by $\cl(\P^r)$:
$$ \cl(\P^r) \supset \Q^r \mathdef \conv \left\{ P^r_0, P^r_1, \ldots, P^r_r \right\}
= \conv \left\{ P(K_{r,r}), P(C^r_1), \ldots, P(C^r_r) \right\} .$$
In fact, $\cl(\P^r) = \Q^r$.
Before we prove that, let us examine how this polygon $\Q^r$ looks like.
Actually, one does not need to consider all the points $P^r_l$.
For $\lceil  r/2 \rceil \leq l \leq r$, the graph $C^r_l$ contains
no four-cycles, so $d_4(C^r_l) = 0$.
It means that the points $P^r_l = P(C^r_l)$, $\lceil  r/2 \rceil \leq l \leq r$
all lie on the horizontal segment connecting
$P^r_{\lceil r/2 \rceil}$ and $P^r_r$.
Thus it is enough to consider the convex hull of the points
$P^r_0, P^r_1, \ldots, P^r_{\lceil r/2 \rceil}, P^r_r$.
However, it is not hard to see that
one cannot omit any more points from this system:
$\Q^r$ is a convex polygon with $\lceil r/2 \rceil +2$ vertices.
Figure \ref{fig:q12} shows the polygon $\Q^{12}$.
\begin{figure}[h]
\centering
\includegraphics[width=10.5cm]{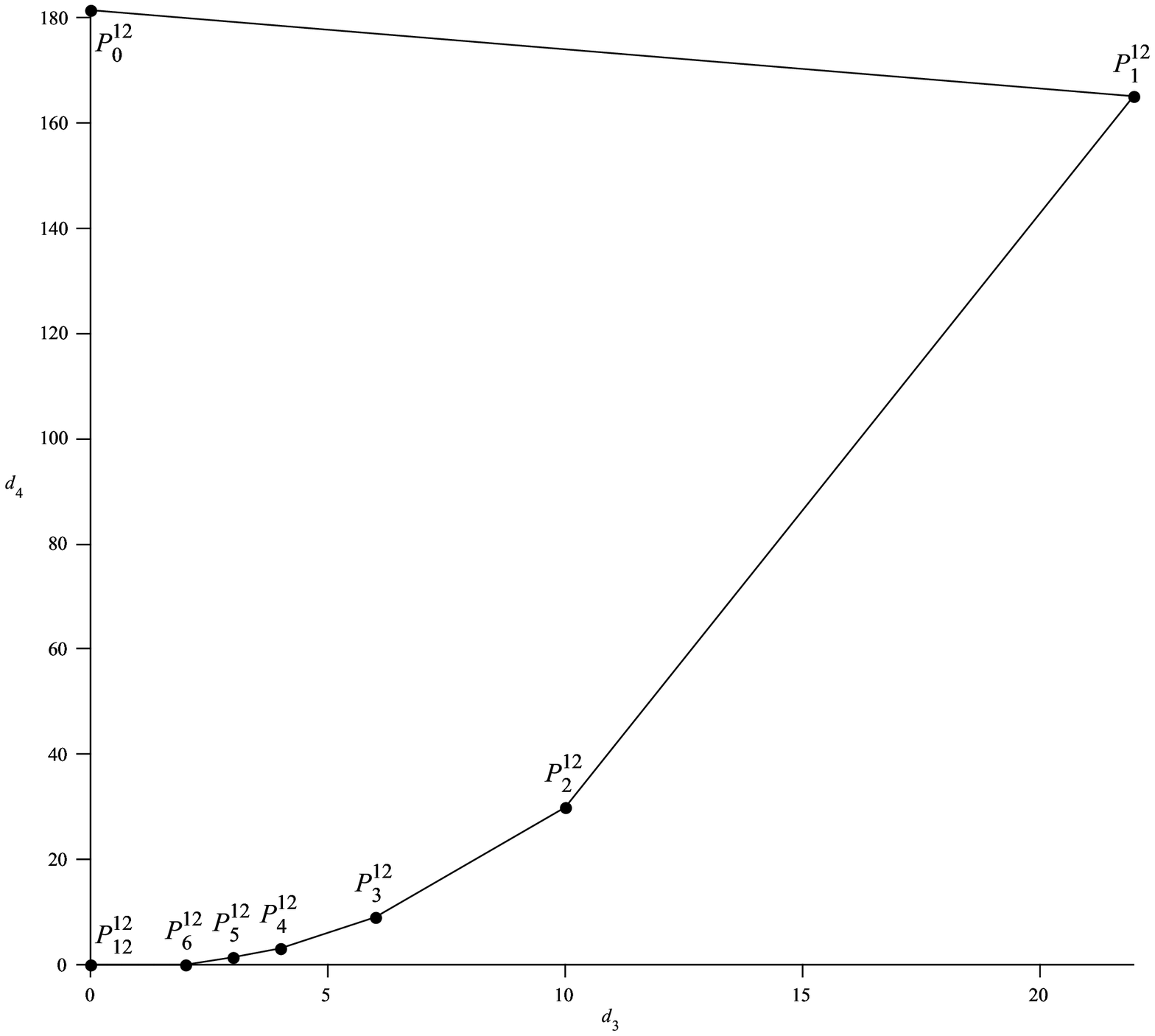}
\caption{$\Q^{12}$}
\label{fig:q12}
\end{figure}

To get a rough picture how $\Q^r$ looks like,
we do the following. For $C^r_l$ we have
$$ d_3(C^r_l) = \frac{1}{6} l \left( \frac{r}{l} \right)^2 +
O(r) \approx \frac{r^2}{6l} \ ; \
d_4(C^r_l) = \frac{1}{8} l \left( \frac{r}{l} \right)^3 +
O(r^2) \approx \frac{r^3}{8l^2} . $$
More precisely, for any fixed $1 \leq l \leq r$
$$ \lim_{r \to \infty} \frac{6}{r^2} \cdot d_3(C^r_l) = \frac{1}{l} \ ; \
\lim_{r \to \infty} \frac{8}{r^3} \cdot d_4(C^r_l) = \frac{1}{l^2} .$$
As for $K_{r,r}$,
$$ d_3(K_{r,r}) = 0 \ ; \ d_4(K_{r,r}) = \frac{r^3}{8} .$$
Consequently, if we consider the image of $\Q^r$ under
the linear transformation that \textit{multiplies}
the $x$-coordinate by $6/r^2$ and the $y$-coordinate by $8/r^3$,
then the obtained polygons converge (in Hausdorff distance) to the following set
$$ \Q \mathdef \conv\left( \left\{(0,0),(0,1) \right\} \cup \left\{
(x,x^2) : \frac{1}{x} \in \IN \right\} \right) \subset \IR^2 .$$
Figure \ref{fig:q_limit} shows the limit set $\Q$.
\begin{figure}[h]
\centering
\includegraphics[width=8.5cm]{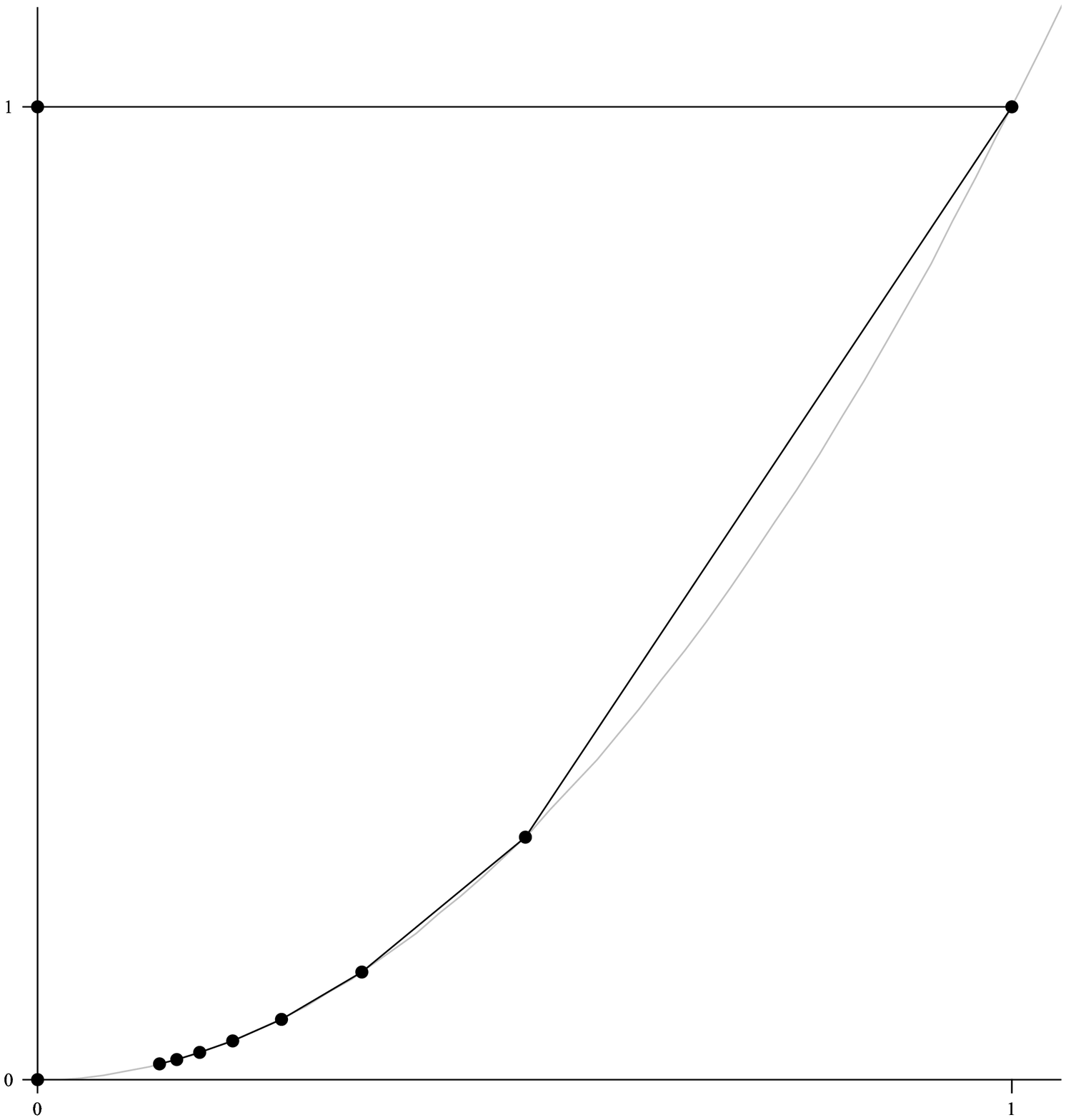}
\caption{$\Q$}
\label{fig:q_limit}
\end{figure}

\subsection{Main theorem}

\begin{theorem} \label{thm:main}
We have $\P^r = \{(x,y) \in \Q^r \, : \, x,y\in \IQ \}$.
In particular, $ cl(\P^r) = \Q^r $.
\end{theorem}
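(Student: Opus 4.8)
The plan is to prove the two set-equalities separately, building on what is already established: the inclusion $\cl(\P^r)\supseteq\Q^r$ and the convexity of $\cl(\P^r)$ (Proposition \ref{prop:convex}). It remains to show $\cl(\P^r)\subseteq\Q^r$ together with the rationality refinement. Since $P(G)$ is the average of the local points $P(G,x)=(c_3(G,x)/3,c_4(G,x)/4)$ and $\Q^r$ is convex, my strategy is to verify that $P(G)$ satisfies each of the linear inequalities that cut out the polygon $\Q^r$: the trivial bound $d_3\ge 0$ (the left edge $d_3=0$), the single \emph{upper} edge joining $P(K_{r,r})$ to $P(C^r_1)=P(K_{r+1})$, and the \emph{lower} chain of chords through the points $P^r_l$. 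The far-right vertex is where the upper edge meets the lower chain, so no further inequality is needed, and the work splits into an upper-boundary estimate and a lower-boundary estimate.

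For the bookkeeping I would first record a clean local formula. For $y\ne x$ put $w_y=|N(x)\cap N(y)|$; each $4$-cycle through $x$ has a unique opposite vertex $y$ and is determined by a pair of common neighbours of $x$ and $y$, so $c_4(G,x)=\sum_{y\ne x}\binom{w_y}{2}$, while $c_3(G,x)=|E(G^x)|=\tfrac12\sum_{y\in N(x)}w_y$. Moreover $\sum_{y\ne x}w_y=\sum_{a\in N(x)}(r-1)=r(r-1)$, with $w_y\le r-1$ for $y\in N(x)$ and $w_y\le r$ otherwise. The upper edge then follows vertex by vertex: using $\binom{w}{2}\le\tfrac{r-2}{2}w$ on the neighbour terms (which absorbs the extra $\tfrac12 w_y$ coming from $c_3$) and $\binom{w}{2}\le\tfrac{r-1}{2}w$ on the rest, one gets $c_3(G,x)+c_4(G,x)\le\tfrac{r-1}{2}\sum_{y\ne x}w_y=(r-1)\binom{r}{2}$, with equality exactly at the links of $K_{r,r}$ and $K_{r+1}$. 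Averaging over $x$ yields $3d_3(G)+4d_4(G)\le (r-1)\binom r2$, so $P(G)$ lies on the correct side of the top edge.

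The hard part will be the lower boundary. Dropping the nonnegative ``type 2'' contribution gives $4c_4(G)=\sum_x c_4(G,x)\ge 2\sum_{e\in E(G)}\binom{t_e}{2}$, where $t_e$ is the number of triangles on the edge $e$, and correspondingly $3c_3(G)=\sum_e t_e$. One is tempted to finish by convexity of $\binom{\cdot}{2}$ over the edges, but this only yields a smooth parabola that passes through the vertices $P^r_l$ and dips strictly below $\Q^r$ on the chords between consecutive vertices; indeed, a single vertex whose link $G^x$ is a $d$-regular graph (with its outside edges spread out so there are no type~2 cherries) realizes a point on that parabola, e.g. a point strictly below the chord $P^r_2P^r_3$ when $r=12$ and $d=4$. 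Such points are \emph{not} attainable on average, and the reason is structural: if the type~2 count vanishes at every vertex, then any two vertices with at least two common neighbours must be adjacent, which forces every link $G^x$ to be a disjoint union of cliques. Thus the only graphs for which the convexity bound is simultaneously tight are precisely the clique graphs $C^r_l$, and the true minimum of $d_4$ for a given $d_3$ is the piecewise-linear envelope through the $P^r_l$ rather than the parabola. Turning this dichotomy into a quantitative inequality — bounding how much the ``defect from a clique partition'' must force either extra cherries inside the links or a positive type~2 count — is where I expect to lean on the cited result of Bollob\'as; this is the main obstacle, and the rest is comparatively routine.

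Finally, for the rationality statement I would argue as follows. Every $r$-regular finite graph has integer $c_3,c_4$ and $|V(G)|$, so $\P^r\subseteq\IQ^2$, and by the two boundary estimates $\P^r\subseteq\Q^r$; hence $\P^r\subseteq\{(x,y)\in\Q^r:x,y\in\IQ\}$. Conversely, a rational point of the rational polygon $\Q^r$ is a rational convex combination of its rational vertices $P^r_l$ — one triangulates $\Q^r$ into triangles with rational vertices, locates the point in one of them, and solves a rational $2\times 2$ system for rational barycentric weights. Applying Proposition \ref{prop:convex} finitely many times then places every such point in $\P^r$, giving the reverse inclusion and hence the claimed equality. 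The ``in particular'' clause $\cl(\P^r)=\Q^r$ follows because $\Q^r$ is the closure of its own set of rational points.
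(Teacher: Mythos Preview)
Your upper-boundary argument is correct and in fact cleaner than the paper's: the paper defers that inequality to a fairly long appendix computation (Proposition~\ref{prop:under}), whereas your bound via $\binom{w_y}{2}\le\frac{r-2}{2}\,w_y$ for $y\in N(x)$ and $\binom{w_y}{2}\le\frac{r-1}{2}\,w_y$ otherwise gives $c_3(G,x)+c_4(G,x)\le(r-1)\binom{r}{2}$ in one line. The rationality paragraph is also fine and more explicit than the paper's.

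The genuine gap is exactly where you flag it, and it is not just a matter of plugging in Bollob\'as. Dropping the type-2 cherries entirely leaves you, per vertex, with the type-1 count $3n_3(G^x)+n_2(G^x)$, and this quantity is \emph{not} pointwise above the broken line: for $r=4$ take $G^x$ a path on four vertices, so $e=3$, $n_3=0$, $n_2=2$, giving the point $(3,2)$, while the chord from $D^4_2$ (point $(2,0)$) to $D^4_1=K_4$ (point $(6,12)$) sits at height $3$ when $e=3$. So your edge sum $2\sum_e\binom{t_e}{2}$, which equals $\sum_x(3n_3+n_2)(G^x)$, is genuinely too weak, and Bollob\'as cannot be applied to it as stated.

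The paper's remedy is a small redistribution you are essentially one identity away from. Instead of discarding all $4$-cycles whose opposite vertex lies outside $N(x)$, one re-counts each $4$-cycle according to its diagonals: if both diagonals are edges, count it once at every vertex; if exactly one diagonal is an edge, count it twice at each endpoint of that diagonal and not at the other two; if neither, drop it. The resulting local quantity is $\ct(G,x)=3n_3(G^x)+2n_2(G^x)$, still determined by the link, and $\sum_x\ct(G,x)\le 4c_4(G)$ since no $4$-cycle is counted more than four times. In your language: among the type-2 cherries you discarded at $x$, those whose two leaves are adjacent correspond to one-diagonal $4$-cycles seen from the non-diagonal side, and globally these are equinumerous with $\sum_x n_2(G^x)$; adding them back upgrades $3n_3+n_2$ to $3n_3+2n_2$. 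After the substitution $2n_3+n_2=n_0+(r-2)e-\binom{r}{3}$ and passage to complements, the pointwise claim becomes exactly Bollob\'as's theorem on $(e,n_3)$ against the Tur\'an graphs.
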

We have already seen that all the vertices of $\Q^r$ lie in $\P^r$.
Due to Proposition \ref{prop:convex} it follows that
$\P^r \supset \{(x,y) \in \Q^r \, : \, x,y\in \IQ \}$.
The points in $\P^r$ have rational coordinates,
thus it suffices to prove that $\P^r \subset \Q^r$.
Taking an arbitrary $r$-regular graph $G$,
we need to show that $P(G) \in \Q^r$.
Since it is clear that $P(G)$ is between the vertical lines $x=0$ and $x=r(r-1)/6$,
we need to prove the following two assertions:
\begin{itemize}
\item $P(G)$ is \textit{under} (or on) the segment
connecting $P^r_0 = P(K_{r,r})$ and $P^r_1 = P(K_{r+1})$.
\item $P(G)$ is \textit{above} (or on) the broken line $P^r_1 P^r_2  \cdots P^r_r$.
\end{itemize}

We know that $P(G)$ is the center of the points $P(G,x)$, $x \in V(G)$.
It turns out that, in fact, all points $P(G,x)$
must lie under the segment $P^r_0 P^r_1$.
(It is quite clear if one thinks about it,
but the rigorous proof is a bit technical, so we skip it now,
but the complete proof can be found in the Appendix,
see Proposition \ref{prop:under}.)
It follows that their center, $P(G)$, also lies under the segment.

We cannot prove the second assertion in the same manner though,
because it is possible to have points $P(G,x)$
strictly under the broken line $P^r_1 P^r_2  \cdots P^r_r$.
But even if we have such points, the rest of the points
will \textit{pull the center back} above the broken line.
We mention that it is not very hard to show that for $r =3,4$ the points
$$ \frac{1}{2}P(G,x) + \frac{1}{2r} \sum_{y\in N(x)} P(G,y) $$
lie above the broken line for an arbitrary vertex $x$.
(The center of these points is also $P(G)$, so this implies
the second assertion for $r=3,4$.)
However, for larger values of $r$ this method does not work.
We need a different approach.

Let us take a $4$-cycle $x_1x_2x_3x_4$ in $G$.
This $4$-cycle was counted once in each $c_4(G,x_i)$, $i=1,2,3,4$.
(Recall that $\sum_{x\in V(G)} c_4(G,x) = 4c(G)$.)
Now we introduce a different way of counting cycles.
How we count a $4$-cycle $x_1x_2x_3x_4$ depends on whether
the \textit{diagonals} $x_1x_3$ and $x_2x_4$ are also edges of $G$.
\begin{itemize}
\item If both $x_1x_3$ and $x_2x_4$ are edges of $G$,
then we count this $4$-cycle once at each $x_i$, $i=1,2,3,4$.
\item If only one of the diagonals, say $x_1x_3$, is an edge in $G$,
then we count this $4$-cycle twice at $x_1$ and $x_3$,
but we will not count it at $x_2$ and $x_4$.
\item If neither $x_1x_3$, nor $x_2x_4$ is an edge in $G$,
then we do not count this $4$-cycle at all.
\end{itemize}
Let $\ct(G,x)$ denote the total number of $4$-cycles counted at a vertex $x$.
Unlike $c_4(G,x)$, $\ct(G,x)$ is determined by $G^x$:
it is the number of cherries in $G^x$ with adjacent leaves
plus twice the number of cherries in $G^x$ with nonadjacent leaves.

We counted each $4$-cycle at most four times, hence
\begin{equation} \label{eq:c&ct}
\sum_{x \in V(G)} \ct(G,x) \leq 4 c_4(G) = \sum_{x \in V(G)} c_4(G,x) .
\end{equation}
Set $\Pt(G,x) \mathdef \left( c_3(G,x)/3, \ct(G,x)/4 \right)$,
and let the center of these points be $\Pt(G)$.
The points $\Pt(G)$ and $P(G)$ lie on the same vertical line,
and $P(G)$ is above $\Pt(G)$ because of \eqref{eq:c&ct}.
It is easy to see that in the case of $G = C^r_l$
the two points coincide: $\Pt(C^r_l) = P(C^r_l) = P^r_l$.
In the remainder of this section we will prove that
for each vertex $x$ the point $\Pt(G,x)$ is above the broken line.
This would imply that $\Pt(G)$ is also above it and so is $P(G)$.

We know that both $c_3(G,x)$ and $\ct(G,x)$ are determined by $G^x$.
Note that the graph $G^x$ has $r$ vertices,
but, apart from that, it can be arbitrary.
So we take an arbitrary graph $H$ with $r$ vertices.
The number of edges is $e=e(H)$, and for $0 \leq i \leq 3$
by $n_i = n_i(H)$ we denote the number of triples of vertices
with the property that there are $i$ edges going between them.
For example, $n_3$ stands for the number of $K_3$'s in the graph.
Then the total number of triples is
\begin{equation} \label{eq:sum}
n_0 + n_1 + n_2 + n_3 = \binom{r}{3} .
\end{equation}
Let us take a triple of vertices and count the number of edges
going between the three vertices, then add up this number for all possible triples.
On the one hand, this sum is clearly $n_1 + 2n_2 + 3n_3$.
On the other hand, we count each edge $r-2$ times (once in each of
the $r-2$ triples that contain both of its endpoints). Consequently,
\begin{equation} \label{eq:weighted_sum}
n_1 + 2n_2 + 3n_3 = (r-2)e .
\end{equation}
Note that the number of cherries with adjacent leaves is $3n_3$
and the number of cherries with non-adjacent leaves is $n_2$.
We obtain that
$$ c_3(G,x) = e(G^x) \ ; \ \ct(G,x) = 3n_3(G^x) + 2n_2(G^x) .$$

For the graphs $G=C^r_l$, the subgraphs $G^x$ are all isomorphic to $D^r_l$.
So we need to show that the point $(e, 3n_3 + 2n_2)$
is always above the broken line connecting the points
$(e(D^r_l), 3n_3(D^r_l) + 2n_2(D^r_l))$, $l=1,\ldots,r$.
Since $n_2(D^r_l) = 0$, we get a stronger result if,
instead of $3n_3 + 2n_2$, we prove the same for
$3n_3 + (3/2)n_2$, or, equivalently, for $2n_3 + n_2$.

Using \eqref{eq:sum} and \eqref{eq:weighted_sum} we have
$$ 2 n_3 + n_2 - n_0 = (n_1 + 2n_2 + 3n_3) -
(n_0 + n_1 + n_2 + n_3) = (r-2)e - \binom{r}{3} .$$
It follows that
$$ 2n_3 + n_2 = n_0 + (r-2)e - \binom{r}{3} .$$
Thus proving it for $2n_3 + n_2$ is the same as proving it for $n_0$.
Regarding the problem in the complement graph we get the following:
prove that the point $(e, n_3)$ (for an arbitrary graph with order $r$)
is above the broken line connecting the points $(e(T^r_l), n_3(T^r_l))$
corresponding to the Tur\'an graphs $T^r_l$, $l=1,\ldots,r$.
This is a result of Bollob\'as from 1976 \cite{Bo_compl_subgr2}.
(The proof, which is a very nice variant of Zykov's symmetrization,
can also be found in \cite[Chapter VI]{Bo_extremal}.)
Actually, he proved this in greater generality: for points
with the first coordinate being the number of $K_p$'s
and the second coordinate being the number of $K_q$'s
in the graph for arbitrary integers $2 \leq p < q \leq r$.
So using this result of Bollob\'as in the special case $p=2$; $q=3$
gives us exactly what we needed;
the proof of Theorem \ref{thm:main} is complete.

As we have seen, our problem turned out
to be related to the following well-studied problem:
what is the minimal number $f(e)$ of
triangles ($K_3$'s) in a graph with $e$ edges?
There are a lot of good bounds for $f(e)$,
and for certain $e$'s even the exact value of $f(e)$ is known.
For details, see \cite{LovSim, min_dens_tri}.


\section{Unimodular random graphs} \label{sec:unimodular}

In this section we explain how the proof of the previous section
can be modified to work for unimodular random graphs.
First we give a brief overview of random graphs,
Benjamini-Schramm convergence, and unimodularity.

We start with introducing the space of rooted graphs.
For a positive integer $D$ let $\RG_D$ denote the set of
rooted graphs $(G,o)$, where $G$ is a connected,
undirected graph $G$ with degree bound $D$
(that is, each vertex has at most $D$ neighbors),
and $o$ is a distinguished vertex, called the root of $G$.
Note that such a rooted graph has finitely or countably many vertices.
Let the \emph{rooted distance} of $(G_1,o_1)$ and $(G_2,o_2)$ be
$1/k$ where $k$ is the maximal integer such that
the $k$-balls around $o_{1}$ and $o_{2}$ are isomorphic
(as rooted graphs). The rooted distance turns $\RG_D$
to a compact, totally disconnected metric space.

A \emph{random graph} is a Borel probability measure on $\RG_D$.
A natural way to get a convergence notion for these random graphs is
to consider the weak topology on the space of Borel probability measures.
(Note that this is a compact space, since $\RG_D$ is a complete metric space.)

Any finite unrooted graph $G$ gives rise to a random graph
by assigning the root of $G$ uniformly randomly
and taking the connected component of the root.
We denote the distribution of this random rooted graph by $\lambda _{G}$.
This observation allows us to define
a convergence notion for finite unrooted graphs as well.
We say that a sequence $G_n$ of finite graphs is
Benjamini-Schramm convergent if the corresponding
random graphs $\lambda_{G_n}$ converge in the weak topology.
The limit of $G_n$ is defined as the weak limit of $\lambda_{G_n}$.

So we can get random graphs as the limit of finite graphs.
The question arises: which random graphs can we get this way?
The only known condition that such random graphs necessarily satisfy
is called unimodularity. (It is open whether this condition is also sufficient;
recall Question \ref{q:big_open}.) We will restrict ourselves to regular random graphs,
since it is somewhat easier to define unimodularity in that special case.
(A random graph is $r$-regular if it is concentrated
on the set of $r$-regular connected rooted graphs.
A random graph is regular if it is $r$-regular for some $r$.)
We take a regular random graph and pick a uniform random neighbor of the root
and consider the directed edge going from the root to this neighbor.
This way we get a probability measure on the space of
connected graphs equipped with a distinguished directed edge.
If we revert this directed edge, we get another probability measure on the same space.
A regular random graph is unimodular if these two measures coincide.
(Note that this is equivalent to unimodularity only in the case of regular random graphs.)

Unimodularity is equivalent to the property called \emph{mass transport principle}.
Let us consider connected graphs (with degree bound $D$)
with an ordered pair of distinguished vertices.
There is a natural topology on the space of such graphs
(similar to the one we defined on $\RG_D$).
A non-negative Borel function on this space
is called a mass transport function:
the function describes how much mass is sent
from the first distinguished vertex to the second one.
If we have a random graph, then it makes sense to talk about
the expected total mass that the root sends out
as well as the expected total mass that the root receives.
The random graph is unimodular if these two values
are equal for arbitrary mass transport function.

For a random graph $\lambda$ let $d_k(\lambda)$ be the expected number of
$k$-cycles containing the root divided by $k$. Set
$$ \Ph^r = \Ph^r_{3,4} \mathdef \left\{ \left( d_3(\lambda), d_4(\lambda) \right) :
\mbox{$\lambda$ is an $r$-regular unimodular random graph} \right\} \subset \IR^2 .$$
Note that for a finite unrooted graph $G$ we have $d_k(G) = d_k(\lambda_G)$.
It follows that $\P^r \subset \Ph^r$. It is also easy to see that
$d_k$ is a continuous function on $\RG_D$.
Consequently, $\Ph^r$ is the continuous image of $\RG_D$.
Since $\RG_D$ is compact, so is $\Ph^r$.
It follows that $\Ph^r \supset \cl (\P^r) = \Q^r$. In fact:
\begin{theorem} \label{thm:uni}
We have $ \Ph^r = \Q^r = \cl(\P^r)$.
\end{theorem}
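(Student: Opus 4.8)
The plan is to prove the only inclusion that does not already follow from the remarks preceding the statement. We have $\Ph^r \supset \Q^r = \cl(\P^r)$, so it remains to show $\Ph^r \subset \Q^r$: for every $r$-regular unimodular random graph $\lambda$ the point $P(\lambda) \mathdef \left( d_3(\lambda), d_4(\lambda) \right)$ lies in $\Q^r$. By the definition of the densities for random graphs, $d_3(\lambda) = \tfrac13 \mathbb{E}\, c_3(G,o)$ and $d_4(\lambda) = \tfrac14 \mathbb{E}\, c_4(G,o)$, so $P(\lambda) = \mathbb{E}\, P(G,o)$ is the expectation of the local point $P(G,o)$ over the rooted graph $(G,o)$ drawn from $\lambda$. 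Exactly as in the proof of Theorem \ref{thm:main}, membership in the convex polygon $\Q^r$ amounts to two conditions: lying under (or on) the segment $P^r_0 P^r_1$, and lying above (or on) the broken line $P^r_1 P^r_2 \cdots P^r_r$. I would establish these two conditions for $P(\lambda)$ separately.

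The first condition transfers with no extra work and uses only the representation of $P(\lambda)$ as an expectation, not unimodularity. Proposition \ref{prop:under} is purely local: for every vertex $x$ of every $r$-regular graph the point $P(G,x)$ lies in the closed half-plane under the line through $P^r_0$ and $P^r_1$, and since $P(G,o)$ depends only on the ball of radius $2$ around $o$, the same holds for $\lambda$-almost every rooted graph $(G,o)$, the support of $\lambda$ consisting of $r$-regular rooted graphs. As a closed half-plane is convex, the expectation $P(\lambda) = \mathbb{E}\, P(G,o)$ lies in it as well.

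For the second condition I would mimic the recounting from the finite proof. Set $\Pt(G,o) \mathdef \left( c_3(G,o)/3, \ct(G,o)/4 \right)$ and $\Pt(\lambda) \mathdef \mathbb{E}\, \Pt(G,o)$; note that $\Pt(\lambda)$ and $P(\lambda)$ share the same first coordinate. Since $\ct(G,o)$ is determined by the $r$-vertex graph $G^o$, the Bollob\'as-based argument of Theorem \ref{thm:main} shows \emph{pointwise} that $\Pt(G,o)$ lies in the region above the broken line; this region is the epigraph of a convex function, hence convex and upward closed, so by convexity $\Pt(\lambda)$ lies above the broken line too. It therefore suffices to prove that $P(\lambda)$ lies weakly above $\Pt(\lambda)$, i.e. that $\mathbb{E}\, \ct(G,o) \le \mathbb{E}\, c_4(G,o)$: since moving a point straight up keeps it above the broken line, this places $P(\lambda)$ above it and finishes the proof.

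The inequality $\mathbb{E}\, \ct(G,o) \le \mathbb{E}\, c_4(G,o)$ is the one place where unimodularity is essential, and I expect it to be the main obstacle, since the corresponding finite inequality \eqref{eq:c&ct} was a global double count that fails vertex by vertex. I would replace it by a mass transport argument. For a doubly rooted graph define the non-negative, isomorphism-invariant, locally computable function $f(G,x,y)$ to be $\tfrac14$ times the weight that the $\ct$-rule assigns to $x$ from each four-cycle containing both $x$ and $y$, summed over such four-cycles. Summing over the far endpoint, the mass sent out by the root is $\sum_y f(G,o,y) = \ct(G,o)$ (each four-cycle through $o$ has four vertices, cancelling the factor $\tfrac14$), while the mass received is $\sum_x f(G,x,o) = W(G,o)$, where $W(G,o)$ is the number of four-cycles through $o$ possessing at least one diagonal, because the total $\ct$-weight of such a cycle is $4$ and is $0$ otherwise. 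The mass transport principle for the unimodular $\lambda$ gives $\mathbb{E}\, \ct(G,o) = \mathbb{E}\, W(G,o)$, and since $W(G,o) \le c_4(G,o)$ holds pointwise, the desired inequality follows. Combining the two conditions yields $P(\lambda) \in \Q^r$, hence $\Ph^r \subset \Q^r$ and $\Ph^r = \Q^r = \cl(\P^r)$.
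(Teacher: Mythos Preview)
Your proof is correct and follows the same route as the paper: the upper bound is handled pointwise via Proposition~\ref{prop:under}, and for the lower bound you replace $c_4$ by $\ct$ through a mass transport argument before invoking the Bollob\'as-based pointwise inequality. The only difference is cosmetic packaging of the transport: the paper classifies rooted $4$-cycles $oabc$ into four types according to which diagonals $ob,\,ac$ are present and uses mass transport to prove $\mathbb{E}\,c_{4,2}=\mathbb{E}\,c_{4,3}$, which is exactly your identity $\mathbb{E}\,\ct=\mathbb{E}\,W$ since $\ct=c_{4,1}+2c_{4,2}$ and $W=c_{4,1}+c_{4,2}+c_{4,3}$.
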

\begin{remark}
The equality $\Ph^r = \cl(\P^r)$ means that
any $r$-regular unimodular random graph $\lambda$
can be approximated by $r$-regular finite graphs
in the sense that the points $(d_3,d_4)$ corresponding to
the finite graphs can be arbitrarily close to
$\left( d_3(\lambda),d_4(\lambda) \right)$.
\end{remark}
\begin{proof}[Proof of Theorem \ref{thm:uni}]
We need to prove that
$P(\lambda) = \left( d_3(\lambda), d_4(\lambda) \right) \in \Q^r$
for an arbitrary $r$-regular unimodular random graph $\lambda$.

For a connected rooted graph $(G,o)$ let $c_k(G,o)$ denote
the number of $k$-cycles in $G$ containing $o$.
By definition we have
$$ d_3(\lambda) = \int \frac{c_3(G,o)}{3} \, \mathrm{d} \lambda( (G,o) ) \ \mbox{ and }
\ d_4(\lambda) = \int \frac{c_4(G,o)}{4} \, \mathrm{d} \lambda( (G,o) ) .$$
As in the finite setting, for an arbitrary $r$-regular rooted graph $(G,o)$
the point $(c_3/3, c_4/4)$ lies between the vertical lines $x=0$ and $x=r(r-1)/6$
and below the segment $P^r_0 P^r_1$. It follows that the same holds for the point
$P(\lambda) = ( d_3(\lambda), d_4(\lambda) )$.
(Note that here we do not even need unimodularity.)
Finally, to prove that $P(\lambda)$ lies above the broken line $P^r_1 P^r_2 \cdots P^r_r$
we distinguish four different types of $4$-cycles
containing the root in a rooted graph $(G,o)$. The $4$-cycle $oabc$ is
\begin{itemize}
\item of type $1$ if both diagonals $ob$ and $ac$ are edges of $G$;
\item of type $2$ if only $ob$ is an edge of $G$;
\item of type $3$ if only $ac$ is an edge of $G$;
\item of type $4$ if neither $ob$, nor $ac$ is an edge of $G$.
\end{itemize}
The number of $4$-cycles of type $i$ is denoted by $c_{4,i}(G,o)$; $i=1,2,3,4$.
For a unimodular random graph $\lambda$
\begin{equation} \label{eq:mtp23}
\int c_{4,2}(G,o) \, \mathrm{d} \lambda( (G,o) ) =
\int c_{4,3}(G,o) \, \mathrm{d} \lambda( (G,o) ) .
\end{equation}
This equality follows immediately from the mass transport principle:
whenever we have four vertices $x_1, x_2, x_3, x_4$ such that
any two except the pair $(x_2, x_4)$ are connected,
then let both $x_2$ and $x_4$ send a mass $1/2$ both to $x_1$ and to $x_3$.
We also have
$$ c_4(G,o) = c_{4,1}(G,o) + c_{4,2}(G,o) + c_{4,3}(G,o) + c_{4,4}(G,o) .$$
As in the finite case we set
$$ \ct(G,o) =  c_{4,1}(G,o) + 2c_{4,2}(G,o) .$$
Using \eqref{eq:mtp23} it follows that
\begin{equation} \label{eq:c&ct_uni}
\int c_4(G,o) \, \mathrm{d} \lambda( (G,o) ) \geq
\int \ct(G,o) \, \mathrm{d} \lambda( (G,o) ) .
\end{equation}
On the other hand, the point $(c_3/3, \ct/4)$ lies above the broken line
for an arbitrary $r$-regular rooted graph $(G,o)$; the proof goes
exactly the same way as in the finite setting.
Putting this and \eqref{eq:c&ct_uni} together we conclude that
$P(\lambda)$ is above the broken line as desired.
\end{proof}
Using this result we can say something about
the spectral properties of regular unimodular random graphs.
The so-called spectral measure can be associated
to any locally finite, connected rooted graph $(G,o)$;
it is denoted by $\mu_{G,o}$. It can be defined through
the transition operator of the graph \cite{mohar_woess}. 
For our purposes, however, it suffices to know
that it satisfies the following property:
$$ \int x^k \, \mathrm{d} \mu_{G,o}(x) = p_k(G,o) .$$
So for a random graph, the moments of the expected spectral measure are equal to
the expected return probabilities of the simple random walk starting from the root.
For $k=3,4$, these expected return probabilities can be expressed in terms of
the densities $d_3$ and $d_4$ of the random graph. Thus using Theorem \ref{thm:uni}
one can describe the possible pairs of the third and fourth moments of
the expected spectral measure of $r$-regular unimodular random graphs.


\section{Appendix}

While constructing the extreme graphs in Section \ref{sec:finite},
we needed the existence of certain \textit{regular} hypergraphs with large girth.
In \cite{sauer} such hypergraphs were constructed
but not in the generality we need here.
The proof of the next lemma is a straightforward generalization of
a construction due to Erd\H os and Sachs \cite{erdos_sachs, sachs}.
\begin{lemma} \label{lemma:reg_hyp}
For any positive integers $g$, $r$, and any sequence of integers $s_1, \ldots, s_r \geq 2$,
there exists a hypergraph $H$ with the following properties:
\begin{itemize}
\item each vertex $x$ of $H$ is contained by exactly $r$ hyperedges;
\item the sizes of the hyperedges containing $x$ are
$s_1, \ldots, s_r$ for any given vertex $x$;
\item the girth of $H$ is at least $g$;
that is, any Berge cycle of $H$ has length at least $g$.
\end{itemize}
\end{lemma}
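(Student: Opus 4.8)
The plan is to realize $H$ through its incidence structure and to build it by an Erd\H{o}s--Sachs type switching argument. First I would reformulate the desired object combinatorially. Since every vertex is required to lie in exactly one hyperedge of each prescribed size, such a hypergraph is the same thing as a system of $r$ partitions $\pi_1, \ldots, \pi_r$ of a common vertex set $V$, where every block of $\pi_i$ has size $s_i$; the hyperedges of $H$ are then precisely the blocks of these partitions, and each vertex lies in one block of each partition, so the sizes of the hyperedges through it are exactly $s_1, \ldots, s_r$. For this to be possible I first fix $n = |V|$ to be a large multiple of $\mathrm{lcm}(s_1, \ldots, s_r)$, so that each $\pi_i$ can genuinely consist of blocks of size $s_i$. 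In this language a Berge cycle of length $k$ is an alternating sequence of distinct vertices and distinct blocks $v_1, B_1, v_2, B_2, \ldots, v_k, B_k$ with $v_j, v_{j+1} \in B_j$ (indices modulo $k$), and the girth requirement is exactly that no such configuration exists for $k < g$.

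Next I would run the extremal \emph{switching} argument. Among the finite, nonempty family of all partition systems $(\pi_1, \ldots, \pi_r)$ on $V$ with the prescribed block sizes, choose one minimizing the number $N$ of Berge cycles of length less than $g$. I claim this minimizer has girth at least $g$. Suppose not, and fix a shortest Berge cycle $C$, of length $\ell < g$, using a block $B_1 \in \pi_i$ with $v_1, v_2 \in B_1$. The move is a size-preserving swap inside the single partition $\pi_i$: pick another block $B' \in \pi_i$ with $B' \neq B_1$ and a vertex $w \in B'$, then move $v_2$ into $B'$ and $w$ into $B_1$. Since $B_1$ and $B'$ are disjoint blocks of the same partition, this is nondegenerate, it leaves all block sizes equal to $s_i$, and it destroys $C$ because $v_2$ no longer lies in $B_1$.

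The decisive point is to choose $B'$ and $w$ so that the swap creates no new short Berge cycle, so that $N$ strictly decreases and minimality is contradicted. Here I would use a locality estimate: from a single vertex one reaches at most $r(s-1)$ further vertices in one Berge step, where $s = \max_i s_i$, so the set of vertices and blocks lying within Berge-distance $g$ of the at most $g$ vertices of $C$ has size bounded by a constant $K = K(r,g,s)$ independent of $n$. Since $\pi_i$ has $n/s_i$ blocks, for $n$ large there is a block $B'$, and a vertex $w \in B'$, lying entirely outside this ball. The swap deletes the two incidences $v_2 \in B_1$ and $w \in B'$ and adds the two incidences $v_2 \in B'$ and $w \in B_1$. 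Deleting incidences can only remove Berge cycles (and it removes at least $C$), while any new short Berge cycle would have to pass through one of the added incidences and hence force a short Berge path joining the far region around $B'$ and $w$ to the near region around $C$, which cannot exist. Thus no short cycle is created and $N$ strictly drops, the desired contradiction.

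I expect the main obstacle to be making this locality argument fully rigorous: one must check that the bound $K(r,g,s)$ really controls every way a new short Berge cycle could form, including cycles that use both modified blocks $B_1$ and $B'$ and cycles that return to $v_2$ through one of its other $r-1$ blocks, and that the overall effect on $N$ is a genuine strict decrease rather than a mere rearrangement of cycles. Everything else---the divisibility setup fixing $n$, nonemptiness of the family, and the fact that the swap preserves both the degree $r$ at each vertex and the prescribed block sizes---should be routine.
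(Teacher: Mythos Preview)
Your approach is genuinely different from the paper's. The paper proves the lemma by a clean double induction on $(g,r)$: given a hypergraph $H_0$ for parameters $(g_0,r_0-1;s_1,\ldots,s_{r_0-1})$ and a hypergraph $G$ for parameters $(g_0-1,|H_0|;s_{r_0},\ldots,s_{r_0})$, it places $|G|$ disjoint copies of $H_0$ and adds one new hyperedge of size $s_{r_0}$ for each hyperedge of $G$, threading it through the corresponding copies. The girth check is then a two--line case split. No switching or extremal choice is used.

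The obstacle you flag is a real gap, not a routine detail. Your key sentence ---``any new short Berge cycle would have to \ldots\ force a short Berge path joining the far region around $B'$ and $w$ to the near region around $C$''--- fails precisely in the case you worry about, when the new cycle $D$ uses \emph{both} added incidences. Write $D$ as
\[
v_2,\;B',\;a,\;P_1,\;w,\;B_1,\;b,\;P_2,\;v_2,
\]
with $a\in B'\setminus\{w\}$, $b\in B_1\setminus\{v_2\}$, and $P_1,P_2$ Berge paths using only old incidences. Then $P_1$ runs far--to--far and $P_2$ runs near--to--near; neither crosses between the regions, so your locality bound says nothing. The only available bound is that $P_1\cup\{B'\}$ and $P_2\cup\{B_1\}$ are Berge cycles in the \emph{old} hypergraph, hence $|P_i|\ge \ell-1$; this gives only $|D|\ge 2\ell$, which may well be $<g$. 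So the swap can create new short cycles and minimality of $N$ is not contradicted.

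This can be repaired, but not merely by enlarging the ball around $C$. One fix is to minimise a weighted potential $\Phi=\sum_{k<g}M^{g-k}N_k$ for a large constant $M=M(r,g,s)$: the swap destroys $C$ (weight $M^{g-\ell}$) and creates at most a bounded number of cycles, each of length $\ge 2\ell$ and hence weight $\le M^{g-2\ell}$, so $\Phi$ strictly drops. Alternatively one can also demand that $B'$ lie on no short Berge cycle (forcing $|P_1|\ge g-1$), but then one must separately show such a block exists, which needs control on the total number of short cycles. Either way, the argument as written is incomplete; the paper's inductive construction sidesteps all of this.
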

\begin{proof}
We prove by \textit{double induction}.
Take any $g_0 \geq 2$ and $r_0 \geq 2$,
and assume that the lemma holds for $g = g_0-1$ and arbitrary $r$,
and also for $g = g_0$, $r=r_0-1$ (and for arbitrary prescribed sizes).

By the inductive hypothesis we have a hypergraph $H_0$ satisfying the conditions of the lemma
for $g = g_0$, $r = r_0-1$ and prescribed sizes $s_1, \ldots, s_{r_0-1}$.
We also have a hypergraph $G$ satisfying the conditions
for $g = g_0-1$, $r = \left| H_0 \right|$ and
each of the $\left| H_0 \right|$ prescribed sizes being $s_{r_0}$.

Having these two hypergraphs, we take $\left| G \right|$ copies of $H_0$,
denoted by $H_1, \ldots, H_{|G|}$. Our hypergraph $H$ will be the disjoint union of
the hypergraphs $H_1, \ldots, H_{|G|}$ with some additional hyperedges.
Suppose that the vertex set of $G$ is $\{1,2,\ldots,\left| G \right|\}$.
For each hyperedge $E$ of $G$ we add a hyperedge to $H$,
which contains one vertex from every $\{H_i : i\in E\}$.
Since $G$ has $\left| H_0\right|$ hyperedges containing a vertex $i$,
we can choose a different vertex from $H_i$ for each of these
$\left| H_0\right| = \left| H_i\right|$ hyperedges.
Then any vertex of $H$ is in exactly one of
the additional hyperedges (which all have size $s_{r_0}$).

We claim that $H$ satisfies all three conditions of the lemma
for $g=g_0$, $r=r_0$ and prescribed sizes $s_1, \ldots, s_{r_0}$.
The first two conditions clearly hold.
To prove the third condition, we take an arbitrary Berge cycle in $H$:
distinct vertices $x_0, x_1, \ldots, x_{k-1}, x_k = x_0$ and
hyperedges $E_0, \ldots, E_{k-1}$ for some $k\geq 3$
such that $\{x_j, x_{j+1} \} \subset E_j$.
If all the vertices in the cycle lie in the same $H_i$,
then the length $k$ of the cycle must be at least $g_0$.
If not, then the cycle starts in one of the $H_i$'s,
it makes a few steps inside $H_i$ using its hyperedges,
then it jumps into another $H_i$ using
one of the additional hyperedges, and so on.
So we can consider the corresponding cycle in the hypergraph $G$
which has girth at least $g_0-1$. Moreover, the cycle in $H$ must take at least one step
inside every new $H_i$ it jumps into. So $k \geq 2(g_0-1) \geq g_0$.
\end{proof}

Finally, as promised, we give a rigorous proof for the following statement.
\begin{proposition} \label{prop:under}
For an arbitrary $r$-regular graph $G$ and an arbitrary vertex $x \in V(G)$,
the point $P(G,x)$ lies under or on the segment $P_0^rP_1^r$.
\end{proposition}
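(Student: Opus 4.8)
The plan is to convert the geometric statement into a single scalar inequality and then prove that inequality by a convexity (chord) bound applied neighbour by neighbour.

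First I would pin down the segment itself. A direct computation gives $P^r_0 = P(K_{r,r}) = (0,\, r(r-1)^2/8)$ and $P^r_1 = P(K_{r+1}) = (r(r-1)/6,\, r(r-1)(r-2)/8)$, so the line through them has slope $-3/4$ in the natural coordinates. Writing $P(G,x) = (c_3(G,x)/3,\, c_4(G,x)/4)$ and substituting into the equation of this line, the assertion that $P(G,x)$ lies under or on $P^r_0 P^r_1$ becomes exactly
$$ c_3(G,x) + c_4(G,x) \le \frac{r(r-1)^2}{2} . $$
Both endpoints satisfy this with equality, which is a reassuring sanity check.

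Next I would rewrite the two cycle counts in a common language. Set $A = N(x)$ and, for every vertex $y$, let $f(y) = |N(y) \cap A|$ be the number of neighbours of $y$ lying in $A$. Since $c_3(G,x) = e(G^x)$ counts the edges inside $A$, we have $c_3(G,x) = \tfrac12 \sum_{y \in A} f(y)$. A $4$-cycle through $x$ is determined by the vertex $y \ne x$ opposite to $x$ together with the unordered pair of $x$'s two neighbours on the cycle, both of which must lie in $N(y) \cap A$; hence
$$ c_4(G,x) = \sum_{y \ne x} \binom{f(y)}{2} . $$
Finally, counting edges incident to $A$ gives $\sum_{y} f(y) = \sum_{a \in A} \deg(a) = r^2$, and since $f(x) = |A| = r$, we get the global constraint $\sum_{y \ne x} f(y) = r(r-1)$.

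The key step is a termwise bound that treats two kinds of vertices separately. For $y \in A$ the edge $xy$ already uses one of its $r$ incident edges, so $f(y) \le r-1$; combining its two contributions gives $\tfrac12 f(y) + \binom{f(y)}{2} = f(y)^2/2 \le (r-1) f(y)/2$. For $y$ outside $\{x\} \cup A$ we only know $f(y) \le r$, and then $\binom{f(y)}{2} \le (r-1) f(y)/2$. Both estimates say merely that the relevant convex function lies below the chord from $(0,0)$ to its value at the applicable cap, and both yield the same rate $(r-1)/2$ per unit of $f$. Summing over all $y \ne x$ and invoking the constraint,
$$ c_3(G,x) + c_4(G,x) \le \frac{r-1}{2} \sum_{y \ne x} f(y) = \frac{r-1}{2} \cdot r(r-1) = \frac{r(r-1)^2}{2} , $$
which is exactly what we needed.

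The main obstacle, and the only genuinely delicate point, is the asymmetry in the degree caps: vertices of $A$ are capped at $r-1$ but carry the extra linear term coming from $c_3$, whereas outside vertices are capped at $r$ but carry no such term. What makes the argument close is the coincidence that these two effects balance, so that every vertex contributes at the common rate $(r-1)/2$. This also pinpoints the equality cases: the bound is tight precisely when all the ``mass'' $f$ sits at the caps, i.e.\ either $G^x$ is a clique (yielding $K_{r+1}$ and the vertex $P^r_1$) or $A$ is independent with $r-1$ outside vertices each joined to all of $A$ (yielding $K_{r,r}$ and the vertex $P^r_0$), which is consistent with both endpoints lying on the segment.
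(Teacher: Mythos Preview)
Your proof is correct, and it is genuinely shorter and cleaner than the paper's own argument. Both proofs begin the same way: one computes $P^r_0$ and $P^r_1$, observes that in the $(c_3,c_4)$-coordinates the segment has slope $-1$, and reduces the claim to the scalar inequality $c_3(G,x)+c_4(G,x)\le r(r-1)^2/2$. Both also use the same decomposition $c_3=\tfrac12\sum_{y\in A}f(y)$ and $c_4=\sum_{y\ne x}\binom{f(y)}{2}$ (the paper writes this with the notation $d_i$ for $y\in N(x)$ and $e_j$ for $y\notin\{x\}\cup N(x)$). From there the paths diverge. The paper fixes the $d_i$'s, argues via an exchange step that the maximising bipartite graph has $e_j=|\{i:d_i<r-j\}|$, and then runs a second extremal argument over the $d_i$'s, eventually reaching a contradiction by a case analysis on $k=d_1$. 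You instead bound each vertex's contribution linearly in $f(y)$---using $f(y)^2/2\le (r-1)f(y)/2$ for $y\in A$ (cap $r-1$, but with the extra $\tfrac12 f(y)$ term) and $\binom{f(y)}{2}\le (r-1)f(y)/2$ for outside vertices (cap $r$, no extra term)---and then invoke the single identity $\sum_{y\ne x}f(y)=r(r-1)$. The pleasant observation that both caps yield the same linear rate $(r-1)/2$ is what collapses the paper's two-stage optimisation into one line; your approach also makes the equality analysis transparent, whereas the paper does not discuss it.
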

\begin{proof}
Recall that $G^x$ denotes the induced graph on $N(x)$,
where $N(x)$ is the set of neighbors of $x$.
We will denote the vertices in $N(x)$ by $x_1, \ldots, x_r$.
Let the vertex degree of $x_i$ in $G^x$ be $d_i$.
For each $i$ we have $0 \leq d_i \leq r-1$;
we can assume that $d_1 \geq d_2 \geq \ldots \geq d_r$.

Now we consider the subgraph of $G$ which contains those edges
that have one endpoint in $N(x)$ and one endpoint outside $\{x\} \cup N(x)$.
This is a bipartite graph. The degree of $x_i$ is clearly $r-1-d_i$.
By $e_1 \geq e_2 \geq \ldots$ we will denote
the degrees of the vertices outside $\{x\} \cup N(x)$.
The sum of the numbers $r-1-d_i$ clearly equals the sum of $e_j$'s, thus
\begin{equation} \label{eq:sum_of_degrees}
\sum_{i=1}^{r} d_i + \sum_{j} e_j = r(r-1) .
\end{equation}
The above bipartite graph and $G^x$ determine the point $P(G,x)$.
We can forget the rest of the graph.
We can take an arbitrary simple graph
on the vertex set $N(x) = \{x_1, \ldots, x_r\}$
and denote the degree of $x_i$ by $d_i$.
Then we can take an arbitrary simple bipartite graph such that
the first vertex class is $\{x_1, \ldots, x_r\}$
and the degree of $x_i$ is $r-1-d_i$ for each $1\leq i \leq r$.
We can always complement these (by adding new vertices and edges)
into a simple $r$-regular graph.

Instead of $P(G,x) = \left( c_3(G,x)/3, c_4(G,x)/4 \right)$
we will work with the point $\left( c_3(G,x), c_4(G,x) \right)$.
(This is just an affine transformation of our plane.)
As we have seen, $c_3(G,x)$ is the number of edges in $G^x$ and
$c_4(G,x)$ is the number of type 1 cherries plus the number of type 2 cherries:
$$ c_3(G,x) = \frac{1}{2} \left( d_1 + \cdots + d_r \right) \ ; \
c_4(G,x) = \sum_{i=1}^{r} \binom{d_i}{2} + \sum_{j} \binom{e_j}{2} .$$
The left endpoint $P_0^r$ corresponds to the complete bipartite graph $K_{r,r}$,
that is, $d_i = 0$ for each $i$ and $e_1 = \ldots = e_{r-1} = r$.
The right endpoint $P_1^r$ corresponds to
the complete graph $K_{r+1}$, so we have $d_i = r-1$ for each $i$.
Consequently, after the affine transformation
the leftmost point is $\left( 0, (r-1)\binom{r}{2} \right)$, while
the rightmost point is $\left( \binom{r}{2} , r\binom{r-1}{2} \right)$.
We need to prove that $\left( c_3(G,x), c_4(G,x) \right)$
is under or on the segment connecting these two points.
The slope of this segment is $-1$,
so it suffices to prove that the sum of the coordinates
\begin{equation} \label{eq:coord_sum}
c_3(G,x) + c_4(G,x) =
\frac{1}{2} \left( d_1 + \cdots + d_r \right) +
\sum_{i=1}^{r} \binom{d_i}{2} + \sum_{j} \binom{e_j}{2}
\end{equation}
is at most $(r-1)\binom{r}{2}$.

As a first step, we fix the $d_i$'s and maximize $\sum_{j} \binom{e_j}{2}$.
How should we choose our bipartite graph to maximize $\sum_{j} \binom{e_j}{2}$?
The best we can do is the following.
At the beginning, we have the vertices $x_1, \ldots, x_r$ in the first class
and have no vertices in the second class.
We add a vertex to the second class and
connect it to all possible vertices $x_i$,
that is all vertices with $d_i < r-1$.
Then we add another vertex to the second class and connect it
to each vertex $x_i$ with $d_i < r-2$, and so on.
At the end, we get a bipartite graph for which
\begin{equation} \label{eq:e_j}
e_j = \left| \{i: d_i < r - j \} \right| .
\end{equation}
We claim that this is the bipartite graph
for which $\sum_{j} \binom{e_j}{2}$ is maximal.
To prove that we only need to use that
for arbitrary integers $a \geq b \geq 1$ we have
\begin{equation} \label{eq:choose}
\binom{a+1}{2} + \binom{b-1}{2} \geq \binom{a}{2} + \binom{b}{2} + 1.
\end{equation}
Take the bipartite graph for which $\sum_{j} \binom{e_j}{2}$ is maximal.
Assume that $e_1 < |\{i: d_i < r-1 \}|$. Then there is an index $i$
for which $d_i < r-1$ and $x_i$ is not connected to the vertex corresponding to $e_1$.
So we connect them and delete an other edge going out from $x_i$.
By that we increased $e_1$ by one and decreased some other $e_j$ by one.
Since $e_1 \geq e_j$, \eqref{eq:choose} implies
that $\sum_{j} \binom{e_j}{2}$ increased,
which is a contradiction. Thus $e_1 = |\{i: d_i < r-1 \}|$.
Similarly, $e_2$ must equal $| \{i: d_i < r-2 \} |$, and so on.

Now we forget for a moment that we have a graph.
We just take an arbitrary sequence of integers:
$r-1 \geq d_1 \geq d_2 \geq \ldots \geq d_r \geq 0$.
(Of course, not every such sequence corresponds to a graph,
but this does not bother us now.)
We define $e_j$'s as in \eqref{eq:e_j}.
We want to prove that the expression on the right hand side
of \eqref{eq:coord_sum} is at most $(r-1)\binom{r}{2}$.
Take the sequence $\{d_i\}_{i=1}^r$ for which this expression is maximal,
and assume by contradiction that this maximal value
is greater than $(r-1)\binom{r}{2}$.
Let $d_1 = k$ for some integer $1 \leq k \leq r-1$.
By definition we have $e_j = r$ for $1 \leq j \leq r-k-1$,
but $e_{r-k} < r$. We claim that $e_{r-k} < k$.
Let $s$ be the largest positive integer for which
$d_1 = d_2 = \ldots = d_s = k$.
Then we decrease $d_s$ by one so that $d_s = k-1$.
We still have a non-increasing sequence.
The only $e_j$ that changes is $e_{r-k}$: it increases by one.
So if $e_{r-k} \geq k$, then by \eqref{eq:choose} we get
that the value of \eqref{eq:coord_sum} increases by at least $1/2$,
which contradicts our maximality assumption.
It follows that we have $e_j = r$ for $1 \leq j \leq r-k-1$,
but all the other $e_j$'s are less than $k$.
Also, all $d_i$'s are at most $k$.
Due to \eqref{eq:sum_of_degrees} the sum of the other $e_j$'s
and the $d_i$'s is clearly $r(r-1)-r(r-k-1) = rk$.
So we have a few nonnegative integers such that
each of them is at most $k$ and their sum is $rk$.
Then it follows easily from \eqref{eq:choose} that the expression
$$ \sum_{i=1}^{r} \binom{d_i}{2} + \sum_{j \geq r-k} \binom{e_j}{2} $$
is maximal if $r$ of those integers are equal to $k$ and the rest are $0$.
So the above expression is at most $r \binom{k}{2}$,
which yields that the value of \eqref{eq:coord_sum} is at most
$$ \frac{rk}{2} + (r-k-1) \binom{r}{2} + r \binom{k}{2} .$$
By our assumption this is strictly greater than $(r-1)\binom{r}{2}$.
Thus $rk + rk(k-1) > kr(r-1)$. Dividing by $rk$ we get that $k > r-1$, contradiction.
\end{proof}

\bibliographystyle{plain}	
\bibliography{refs}

\begin{thebibliography}{10}

\bibitem{abert_glasner_virag}
M.~Ab{\'e}rt, Y.~Glasner, and B.~Vir{\'a}g.
\newblock The measurable {K}esten theorem.
\newblock {\em Preprint}, 2011.
\newblock arXiv:1111.2080.

\bibitem{aldous_lyons}
D.~Aldous and R.~Lyons.
\newblock Processes on unimodular random networks.
\newblock {\em Electron. J. Probab.}, 12(54):1454--1508, 2007.

\bibitem{Bo_compl_subgr2}
B.~Bollob{\'a}s.
\newblock On complete subgraphs of different orders.
\newblock {\em Math. Proc. Camb. Phil. Soc.}, 79(1):19--24, 1976.

\bibitem{Bo_extremal}
B.~Bollob{\'a}s.
\newblock {\em Extremal graph theory}.
\newblock Dover Books on Mathematics. Dover Publications, 2004.

\bibitem{erdos_sachs}
P.~Erd{\H o}s and H.~Sachs.
\newblock {Regul{\"a}re Graphen gegebener Taillenweite mit minimaler
  Knotenzahl}.
\newblock {\em Wiss. Z. Univ. Halle, Math.-Nat.}, 12(3):251--258, 1963.

\bibitem{gyori}
E.~Gy{\H o}ri.
\newblock On the number of {$C_5$}'s in a triangle-free graph.
\newblock {\em Combinatorica}, 9(1):101--102, 1989.

\bibitem{razborov_et_al}
H.~Hatami, J.~Hladk\'{y}, D.~Kr\'{a}l', S.~Norine, and A.~Razborov.
\newblock On the number of pentagons in triangle-free graphs.
\newblock {\em Preprint}, 2011.
\newblock arXiv:1102.1634v2.

\bibitem{LovSim}
L.~Lov{\'a}sz and M.~Simonovits.
\newblock On the number of complete subgraphs of a graph, ii.
\newblock In {\em Studies in pure mathematics}, pages 459--495. Birkha{\"u}ser,
  1983.

\bibitem{mohar_woess}
B.~Mohar and W.~Woess.
\newblock A survey on spectra of infinite graphs.
\newblock {\em Bull. London Math. Soc.}, 21(3):209--234, 1989.

\bibitem{min_dens_tri}
A.~Razborov.
\newblock On the minimal density of triangles in graphs.
\newblock {\em Combin. Probab. Comput.}, 17(4):603--618, 2008.

\bibitem{sachs}
H.~Sachs.
\newblock Regular graphs with given girth and restricted circuits.
\newblock {\em J. London Math. Soc.}, 38:423--429, 1963.

\bibitem{sauer}
N.~Sauer.
\newblock On the existence of regular $n$-graphs with given girth.
\newblock {\em J. Combin. Theory}, 9(2):144--147, 1970.

\end{thebibliography}

\end{document}